\newtheorem{thm}{Theorem}[section]
\newtheorem{cor}[thm]{Corollary}
\newtheorem{lem}[thm]{Lemma}
\newtheorem{prop}[thm]{Proposition}
\theoremstyle{definition}
\theoremstyle{remark}
\newtheorem{rem}[thm]{Remark}
\newcommand{\set}[1]{\left\{#1\right\}}
\newcommand{\A}{\mathcal{A}}
\newcommand{\E}{\mathcal{E}}
\newcommand{\Li}{\mathcal{L}}
\newcommand{\ML}{\mathcal{M}\mathcal{L}}
\newcommand{\F}{\mathcal{F}}
\newcommand{\pr}{{}^{\prime}}
\newcommand{\Fp}{\F\pr}
\newcommand{\Lp}{\Li\pr}
\newcommand{\mset}{\emptyset}
\newcommand{\U}{\mathcal{U}}
\newcommand{\V}{\mathcal{V}}
\begin{document}
\baselineskip=18pt
\title{Hyperspaces of closed limit sets}
\author{Aldo J. Lazar}
\address{School of Mathematical Sciences\\
         Tel Aviv University\\
         Tel Aviv 69778, Israel}
\email{aldo@post.tau.ac.il}

\thanks{}%
\subjclass{Primary: 54B20; Secondary: 54D45, 46L05}%
\keywords{the hyperspace of the closed subsets, the lower semifinite topology, Fell's topology}

\date{}%
\begin{abstract}
   We study Michael's lower semifinite topology and Fell's topology on the collection of all closed limit subsets of
   a topological space. Special attention is given to the subfamily of all maximal limit sets.
\end{abstract}
\maketitle
\section{Introduction} \label{S:Intro}

The collection of all closed subsets of a topological space has been for long of interest to topologists and
functional analysts. It seems that the modern investigation of the subject began with \cite{M}. It is well known
that there is a one-to-one correspondence between the closed two-sided ideals of a $C^*$-algebra and the closed
subsets of its primitive ideal space as detailed in \cite[Proposition 3.2.2]{DC}. Naturally, this correspondence
attracted the interest of operator algebraists in the hyperspace of the closed subsets of a topological space. It
led Fell to the definition in \cite{F} of a topology on this hyperspace that is of significance in topology and
several branches of analysis. Moreover, according to \cite[Proposition 3.2]{AB}, when one restricts this
correspondence to the closed limit subsets of the primitive ideal space, a very interesting class of ideals is
obtained. The wealth of information given in \cite{A} on this class of ideals stimulated the present investigation
and a significant portion of the results that appear here were proved in \cite{A} for this special family of ideals
of a $C^*$-algebra. However, no knowledge of the theory of $C^*$-algebras is required for the understanding of the
following; we discuss the properties of two topologies on the collection of all the closed limit subsets of a
topological space. All the definitions beyond the common knowledge of a topologist or an analyst are given in the
next section. Of course, all our results are significant only for non Hausdorff spaces, as the primitive ideal
spaces often are.

In section \ref{S:weak} we study the Michael's lower semifinite topology on the family of all closed limit sets. We
establish that with this topology this hyperspace is a locally compact Baire space. We restrict the discussion to
the collection of all maximal limit sets in section \ref{S:maximal}. The Fell topology and the lower semifinite
topology coincide on this hyperspace. This hyperspace is also a Baire space and if the initial space is second
countable and locally compact then the hyperspace of maximal limit sets is a $G_{\delta}$ subspace in the space of
all closed limit sets equipped with the Fell topology.

\section{Preliminaries} \label{S:Pre}

For a topological space $X$ we shall denote by $\F(X)$ the hyperspace of all its closed subsets and $\Fp(X)$ will
stand for the collection of all the non-void closed subsets of $X$. A subset $L$ of $X$ is called a limit set if
there is a net that converges to all the points of $L$. By \cite[Lemme 9]{D}, $L\subset X$ is a limit set if and
only if every finite family of open subsets that intersect $L$ has a non-void intersection. The collection of all
the closed limit sets of $X$ will be denoted by $\Li(X)$ and we set $\Lp(X) := \Li(X)\cap \Fp(X)$. It easily follows
from the lemma quoted above and Zorn's lemma that each $L\in \Li(X)$ is contained in a maximal limit set. Obviously,
every maximal limit set is closed and non-void. $\ML(X)$ will denote the collection of all maximal limit sets. There
is a natural map $\eta_X : X\to \Lp(X)$ defined by $\eta_X(x) := \overline{\set{x}}$. This map is one to one if and
only if $X$ is a $T_0$ space.

Some of the results below are valid under the restriction that the topological space $X$ is locally compact that is,
each point in $X$ has a fundamental system of compact neighbourhoods. Such spaces were called locally quasi-compact
in \cite[I, 9, Ex. 29]{B}.

For $C$ be a compact subset and $\Phi$ a finite family of open subsets of $X$ let
\[
 \U(C,\Phi) := \set{A\in \F(X) \mid A\cap C = \mset, A\cap O \neq \mset, O\in \Phi}.
\]
The collection of all such $\U(C,\Phi)$ forms a base for a topology on $\F(X)$ that was defined by Fell in \cite{F}
and which will be denoted here by $\tau_s$. It was shown in \cite{F} that with this topology $\F(X)$ is a compact
space that is Hausdorff if $X$ is locally compact. If $X$ is locally compact and has a countable base then
$(\F(X),\tau_s)$ is metrizable, see \cite[Lemme 2]{D}.

The collection of all $\U(\mset,\Phi)$ when $\Phi$ runs through all the finite families of open subsets of $X$ is
the base of a $T_0$ topology on $\F(X)$, weaker than $\tau_s$, which we shall denote by $\tau_w$. It was called the
lower semifinite topology in \cite[Definition 9.1]{M} and was further discussed in \cite{K}. It is easily seen that
if $\mathcal{B}$ is a base for the topology of $X$ then the collection of all $\U(\mset,\Phi)$ when $\Phi$ runs
through all the finite subfamilies of $\mathcal{B}$ is a base for $(\F(X),\tau_w)$. Thus, if $X$ is second countable
then $(\F(X),\tau_w)$ is also second countable. Clearly $\Fp(X) = \U(\mset,\set{X})$ hence $\Fp(X)$ is $\tau_w$-open
in $\F(X)$. The only $\tau_w$-open subset of $\F(X)$ to which the empty subset of $X$ belongs is $\F(X)$ itself so
$\Fp(X)$ is $\tau_w$-dense in $\F(X)$ and $\Lp(X)$ is $\tau_w$-dense in $\Li(X)$. Obviously, $\ML(X)$ is also
$\tau_w$-dense in $\Li(X)$. For every $A\in \F(X)$ the $\tau_w$-closure of $\set{A}$ is $\set{B\in \F(X) \mid
B\subset A}$ and this entails the $T_0$ separation property for$(\F(X),\tau_w)$ . The map $\eta_X$ is $\tau_w$
continuous; it is a homeomorphism on its image if $X$ is $T_0$. Generalizing \cite[Proposition 3.1]{A}, we claim
that always the $\tau_w$-closure of $\eta_X(X)$ is $\Li(X)$. Indeed, it is easily seen that $A\in \F(X)$ is in the
$\tau_w$-closure of $\eta_X(X)$ if and only if every finite family of open subsets that intersect $A$ has a non-void
intersection that is, if and only if $A\in \Li(X)$. In particular, $\Li(X)$ is $\tau_w$-closed hence also
$\tau_s$-closed. Thus $(\Li(X),\tau_s)$ is a compact Hausdorff space. From the $\tau_w$-density of $\eta_X(X)$ in
$\Li(X)$ it follows that $(\Li(X),\tau_w)$ is connected when $X$ is connected. However, trivial examples show that
$(\Li(X),\tau_s)$ need not be connected if $X$ is connected.

Concerning the $\tau_s$-convergence of nets the following was proved in \cite[Lemma H.2]{W}:

\begin{prop} \label{P:conv1}

   Let $\set{A_{\iota}}$ be a net of closed subsets of the topological space $X$ and $A\in \F(X)$. The net
   $\tau_s$-converges to $A$ if (\textrm{a}) given $x_{\iota}\in A_{\iota}$ such that the net $\set{x_{\iota}}$
   converges to $x$, then $x\in A$, and (\textrm{b}) if $x\in A$ then there is a subnet $\set{A_{\iota_{\kappa}}}$
   and points $x_{\iota_{\kappa}}\in A_{\iota_{\kappa}}$ such that $\set{x_{\iota_{\kappa}}}$ converges to $x$. When
   $X$ is locally compact the converse is true too : the net $\set{A_{\iota}}$ $\tau_s$-converges to $A$ only if the
   conditions (\textrm{a}) and (\textrm{b}) hold.

\end{prop}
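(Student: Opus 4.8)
The plan is to separate $\tau_s$-convergence into a ``lower'' (hitting) requirement and an ``upper'' (missing) requirement, tied to the two kinds of constraints defining the basic sets $\U(C,\Phi)$, and then to read off each from the corresponding hypothesis. Since a net converges to $A$ exactly when it lies eventually in every basic $\tau_s$-neighbourhood of $A$, and since the finitely many members of $\Phi$ may be handled separately, I would first record the reformulation that $\set{A_\iota}$ $\tau_s$-converges to $A$ if and only if: (i) whenever $O$ is open with $A\cap O\neq\mset$ one has $A_\iota\cap O\neq\mset$ eventually, and (ii) whenever $C$ is compact with $A\cap C=\mset$ one has $A_\iota\cap C=\mset$ eventually. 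The whole argument then reduces to matching (b) with (i) and (a) with (ii).

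For the sufficiency direction I assume (a) and (b) and check (i) and (ii). Statement (i) comes straight from (b): given $O$ open with $A\cap O\neq\mset$, choose $x\in A\cap O$ and apply (b) to obtain points of the $A_\iota$ converging to $x$; as $O$ is a neighbourhood of $x$ these points eventually lie in $O$, so $A_\iota\cap O\neq\mset$ eventually. For (ii) I argue by contradiction: if $A\cap C=\mset$ while $A_\iota\cap C\neq\mset$ along a cofinal set of indices, I select $x_\iota\in A_\iota\cap C$ there; compactness of $C$ yields a convergent subnet whose limit $x$ lies in $C$, and feeding this subnet into (a) places $x\in A$, contradicting $A\cap C=\mset$. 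Hence (i) and (ii) hold and $\set{A_\iota}$ $\tau_s$-converges to $A$; note that this half uses only the compactness of the given $C$, not any hypothesis on $X$.

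For the converse I assume $X$ locally compact and $\set{A_\iota}$ $\tau_s$-convergent to $A$, so (i) and (ii) are at my disposal. Condition (b) is extracted from (i): for $x\in A$ every open $U\ni x$ meets $A$ and is therefore eventually met by the $A_\iota$; the usual construction over the directed set of pairs (index, neighbourhood of $x$) then produces a subnet $\set{A_{\iota_\kappa}}$ and points $x_{\iota_\kappa}\in A_{\iota_\kappa}$ with $x_{\iota_\kappa}\to x$. Condition (a) is where local compactness is indispensable: if $x_\iota\in A_\iota$ with $x_\iota\to x$ but $x\notin A$, then $A$ being closed and $X$ locally compact, $x$ has a compact neighbourhood $C$ with $C\cap A=\mset$; by (ii) the $A_\iota$ eventually miss $C$, yet $x_\iota\to x$ forces $x_\iota\in C\cap A_\iota$ eventually, a contradiction, so $x\in A$.

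The step I expect to be the main obstacle is the upper half, that is, the interface between (a) and (ii). In the sufficiency direction its delicacy is the passage to a convergent subnet inside the compact set $C$ and the application of (a) to that subnet rather than to the original net; in the converse direction it is precisely the manufacture of a compact neighbourhood of $x$ disjoint from $A$ that collapses without local compactness, which explains why the converse is claimed only for locally compact $X$. A secondary point that must be handled with care is that the lower condition is required to give \emph{eventual}, not merely frequent, intersection with each $O$ in the sufficiency direction; the hitting half is otherwise soft and uses nothing about the space $X$.
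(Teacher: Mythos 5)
Two preliminary remarks. First, the paper itself contains no proof of this proposition: it is quoted from \cite[Lemma H.2]{W}, so your proposal can only be judged on its own merits, not against an internal argument. Second, your skeleton is the right one: the sets $\U(C,\mset)$ and $\U(\mset,\set{O})$ form a subbase for $\tau_s$, so $\tau_s$-convergence to $A$ is indeed equivalent to your (i) (every open set meeting $A$ is eventually hit) together with your (ii) (every compact set disjoint from $A$ is eventually missed), and your treatment of the locally compact converse is sound: the pairs construction over (index, neighbourhood of $x$) produces exactly the subnet demanded by (b), and the compact-neighbourhood argument gives (a). Incidentally, this converse half is the only part of the proposition the paper ever uses (in the proof of Lemma \ref{L:closed}).

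The sufficiency direction, however, has a genuine gap, located exactly at the point you dismissed as ``secondary''. Condition (b), as stated, supplies a \emph{subnet} $\set{A_{\iota_\kappa}}$ with points converging to $x\in A\cap O$; this gives $A_{\iota_\kappa}\cap O\neq\mset$ eventually \emph{in $\kappa$}, which translates back to $A_\iota\cap O\neq\mset$ only for a cofinal set of $\iota$ --- frequently, not eventually. Your sentence ``so $A_\iota\cap O\neq\mset$ eventually'' is precisely the unproved step, and it is not repairable from the hypotheses as written: take $X=[0,2]$, $A=\set{0,2}$, $A_{2n}=\set{0,2}$, $A_{2n+1}=\set{2}$. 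Every convergent selection along any subnet has limit $0$ or $2$, so (a) holds under any reading; (b) holds ($0$ via the even-indexed subnet, $2$ via the whole net); yet the net is frequently outside the $\tau_s$-neighbourhood $\U(\mset,\set{[0,1)})$ of $A$, so it does not $\tau_s$-converge to $A$. Thus the sufficiency half is true only if (b) is read in the stronger, ``lim inf'' sense that Fell and Williams actually need: for each $x\in A$ there are points $x_\iota\in A_\iota$, defined for all sufficiently large $\iota$, with $x_\iota\to x$ (equivalently, every open set meeting $A$ is eventually hit); with that reading your argument for (i) is correct as written. A symmetric caution applies to (a): in deriving (ii) you feed a subnet selection into (a), which is the intended reading (and the one the paper uses when invoking this proposition in Lemma \ref{L:closed}), but under the literal full-selection reading of (a) the sufficiency claim also fails (alternate $\set{0}$ and $\set{1}$ in a two-point discrete space: (a) is then vacuous, (b) holds, and there is no convergence). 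A complete treatment had to surface these two readings rather than pass over them.
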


The characterization of the $\tau_s$-convergence of nets given below is in line with our attempt to investigate the
links between the two topologies on the hyperspace of closed subsets noted above. A net in a topological space was
called by Fell primitive in \cite{F} if the set of all its limits equals the set of all its cluster points. With
this definition we have

\begin{prop} \label{P:conv2}

   Let $X$ be a topological space. If $\set{A_{\iota}}$ is a primitive net in $(\F(X),\tau_w)$ and the set of
   all its $\tau_w$-limits is $\set{B\in \F(X) \mid B\subset A}$ where $A\in \F(X)$ then $\set{A_{\iota}}$
   $\tau_s$-converges to $A$. If $X$ is locally compact then the converse holds: a net $\set{A_{\iota}}$ that is
   $\tau_s$-convergent to $A$ in $\F(X)$ is primitive in $(\F(X),\tau_w)$ and the set of all its $\tau_w$-limits is
   $\set{B\in \F(X) \mid B\subset A}$.

\end{prop}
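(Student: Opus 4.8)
The plan is to reduce both implications to the net characterization of $\tau_s$-convergence in Proposition \ref{P:conv1}, so that everything turns on producing its conditions (a) and (b). Throughout I will use the elementary reformulation that $\set{A_\iota}$ $\tau_w$-converges to a set $C$ exactly when every open set meeting $C$ is eventually met by $A_\iota$ (the finite-family condition reduces to the single-set condition by taking an upper bound). Since the prescribed limit set $\set{B\in\F(X)\mid B\subset A}$ contains $A$ itself, the hypothesis immediately gives $A_\iota\to A$ in $\tau_w$; and then $A_\iota\to B$ in $\tau_w$ for every $B\subset A$, because any open set meeting such a $B$ also meets $A$. This monotonicity will serve both implications.

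For the first implication I verify (b) and (a). Condition (b) is immediate: if $x\in A$ then every neighbourhood of $x$ meets $A$, hence is eventually met by $A_\iota$, and a routine diagonal choice produces a subnet with points $x_{\iota_\kappa}\in A_{\iota_\kappa}$ converging to $x$. For (a), suppose a subnet carries points $x_{\iota_\kappa}\in A_{\iota_\kappa}$ with $x_{\iota_\kappa}\to x$; I claim the closed set $\overline{A\cup\{x\}}$ is a $\tau_w$-limit of this subnet. Indeed an open set meeting $\overline{A\cup\{x\}}$ either meets $A$, and is then eventually met along the subnet since the subnet still $\tau_w$-converges to $A$, or contains $x$, and is then eventually met since $x_{\iota_\kappa}\to x$; over a finite family these conditions combine. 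Thus $\overline{A\cup\{x\}}$ is a $\tau_w$-cluster point of the whole net, and here primitivity enters decisively: the cluster set equals the limit set $\set{B\in\F(X)\mid B\subset A}$, so $\overline{A\cup\{x\}}\subset A$ and therefore $x\in A$. With (a) and (b) in hand, Proposition \ref{P:conv1} yields $\tau_s$-convergence to $A$, and no local compactness is needed.

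For the converse I assume $X$ locally compact and $A_\iota\to A$ in $\tau_s$, so the necessity half of Proposition \ref{P:conv1} now supplies (a) and (b). Since $\tau_w\subset\tau_s$ we also have $A_\iota\to A$ in $\tau_w$, whence $A_\iota\to B$ in $\tau_w$ for every $B\subset A$ by the monotonicity above; this is one inclusion for the limit set. Conversely, if $A_\iota\to B$ in $\tau_w$ and $x\in B$, then every neighbourhood of $x$ is eventually met, so as in (b) some subnet selection converges to $x$, and (a) forces $x\in A$; hence $B\subset A$. Thus the set of $\tau_w$-limits is exactly $\set{B\in\F(X)\mid B\subset A}$. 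Finally, every subnet of $\set{A_\iota}$ still $\tau_s$-converges to $A$, so the same reasoning applied to a subnet (again invoking Proposition \ref{P:conv1}) shows that any $\tau_w$-limit of a subnet lies in $A$; as every $\tau_w$-cluster point of the net is such a limit, the cluster set is contained in, hence equal to, the limit set, and the net is primitive.

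The step I expect to be the crux is condition (a), the control of the upper behaviour of the net. In the first implication there is no compactness to lean on, and the device replacing it is the observation that $\overline{A\cup\{x\}}$ is a genuine $\tau_w$-cluster point, so that primitivity, rather than local compactness, converts a cluster point into a limit and confines it under $A$; this is exactly where the primitivity hypothesis is used. In the converse it is precisely local compactness, through Proposition \ref{P:conv1}, that delivers (a). A secondary point demanding care is the distinction between full-net and subnet selections: condition (a) must be applied to subnet selections, which is why the cluster-point mechanism above, and not merely the $\tau_w$-convergence of the full net to $A$, is what is required.
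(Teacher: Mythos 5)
Your proof is correct in substance, and its engine is the same as the paper's: a convergent selection $x_{\iota_\kappa}\in A_{\iota_\kappa}$ produces a $\tau_w$-cluster point of the net (you use $\overline{A\cup\set{x}}$, the paper uses $\overline{\set{x}}$), and primitivity promotes that cluster point to a $\tau_w$-limit, hence to a subset of $A$, which forces $x\in A$. The packaging is genuinely different, though. The paper never invokes Proposition \ref{P:conv1}: in the forward direction it argues by contradiction against a basic neighbourhood $\U(C,\Phi)$, extracting the convergent selection from the compactness of $C$ itself; in the converse it uses local compactness by hand to produce a compact $C$ with $x\in Int(C)\subset C\subset X\setminus A$ and plays the $\tau_s$-neighbourhood $\U(C,\set{X})$ against the $\tau_w$-neighbourhood $\U(\mset,\set{Int(C)})$. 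You route both directions through Proposition \ref{P:conv1}, which is more modular --- the proposition becomes ``Williams' criterion plus primitivity'' --- but it makes the proof lean on the precise quantifiers of that statement, and two of them require exactly the care you show. First, condition (a) there is stated for full-net selections, while both of your uses need it for subnet selections; your fix --- apply the proposition to the subnet, which still $\tau_s$-converges to $A$ --- is correct and is not optional. Second, condition (b) as literally stated (existence of one subnet selection) is too weak to serve as the lower half of a sufficiency criterion: a net alternating between $\mset$ and a fixed singleton $\set{p}$ satisfies (a) and (b) with $A=\set{p}$, yet is not $\tau_s$-convergent to it. Your forward argument is nonetheless airtight because what you actually verify is stronger --- every open set meeting $A$ is \emph{eventually} met by $A_\iota$ --- and this, combined with your subnet form of (a), yields Fell convergence directly (and is what a correctly quantified version of Proposition \ref{P:conv1} asks for). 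I would finish by citing these two verified facts rather than the literal (a) and (b); the paper's self-contained argument sidesteps this issue entirely, which is what its approach buys.
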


\begin{proof}

   Suppose $\set{A_{\iota}}$ is a $\tau_w$-primitive net in $\F(X)$ and the set of all its limits is $\set{B \mid
   B\subset A}$. Let $\U(C,\Phi)$ be a basic $\tau_s$-neighbourhood of $A$. If we assume that $\set{A_{\iota}}$ is
   not eventually in $\U(C,\Phi)$ then, by passing to a subnet and relabelling, we have $A_{\iota}\cap C\neq \mset$
   for each $\iota$. We choose points $x_{\iota}\in A_{\iota}\cap C$. There is a subnet $\set{x_{\iota_{\kappa}}}$
   that converges to a point $x$ in the compact set $C$. We claim that $\set{A_{\iota_{\kappa}}}$ $\tau_w$-converges
   to $\overline{\{x\}}$. Indeed, let $\Phi_1$ be a finite family of open subsets of $X$ all of which intersect
   $\overline{\{x\}}$ that is, such that $x$ belongs to the intersection $V$ of all the sets in $\Phi_1$. Then
   $x_{\iota_{\kappa}}$ is eventually in $V$. Thus, for $\kappa$ large enough $A_{\iota_{\kappa}}\cap V\neq \mset$
   and the claim is established. Thus $\overline{\{x\}}$ is a $\tau_w$-cluster point of the primitive net
   $\set{A_{\iota}}$ hence $\overline{\{x\}}\subset A$. We got $A\cap C\neq \mset$, a contradiction.

   Suppose now that $X$ is locally compact and the net $\{A_{\iota}\}$ $\tau_s$-converges to $A$. It follows readily
   from the definition of the topologies on $\F(X)$ that $\{A_{\iota}\}$ $\tau_w$-converges to every closed subset $B$ of
   $X$ which is a subset of $A$. Assume that there is a subnet $\{A_{\iota_{\kappa}}\}$ that $\tau_w$-converges to
   some $B\in \F(X)$ with $B\setminus A\neq \mset$ and let $x\in B\setminus A$. There is a compact set $C\subset X$
   such that $x\in Int(C)\subset C\subset X\setminus A$. $\U(C,\{X\})$ is a $\tau_s$-neighbourhood of $A$ hence
   $A_{\iota_{\kappa}}\cap C = \mset$ eventually. On the other hand, $\U(\mset, \{Int(C)\})$ is a
   $\tau_w$-neighbourhood of $B$ hence $A_{\iota_{\kappa}}\cap Int(C)\neq \mset$ and we got a contradiction. We have
   proved that each $\tau_w$-cluster point of $\{A_{\iota}\}$ is a subset of $A$ and we are done.

\end{proof}

\section{the topology $\tau_w$} \label{S:weak}

First we want to establish the local compactness of $\F(X)$, $\Fp(X)$, $\Li(X)$, and $\Lp(X)$ with their
$\tau_w$-topology when the space $X$ is locally compact. The result for the first two spaces is likely to be known
but we have no reference for it. The local compactness of $\Li(X)$ and $\Lp(X)$ was established when $X$ is the
primitive ideal space of a $C^*$-algebra in \cite[Theorem 3.7]{A} by using special properties of such spaces.

\begin{lem} \label{L:compactness}

   Let $C_1, \ldots C_n$ be compact subsets of the topological space $X$. Then $\mathcal{S} := \set{A\in \F(X) \mid
   A\cap C_i \neq \emptyset, 1\leq i\leq n}$ is $\tau_w$-compact.

\end{lem}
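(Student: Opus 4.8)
The plan is to verify the $\tau_w$-compactness of $\mathcal{S}$ through nets: I will show that every net in $\mathcal{S}$ has a $\tau_w$-cluster point that again lies in $\mathcal{S}$, which for an arbitrary topological space is equivalent to compactness of the subspace $\mathcal{S}$. The point to keep in mind is that $\mathcal{S}$ is in general \emph{not} $\tau_w$-closed in $\F(X)$: the requirements $A\cap C_i\neq\emptyset$ are ``meeting a compact set'' conditions, which are of $\tau_s$-type rather than $\tau_w$-type, so one cannot simply restrict the $\tau_w$-compactness of the whole space $\F(X)$ (which does hold, since $\tau_w\subseteq\tau_s$ and $(\F(X),\tau_s)$ is compact) to the subspace $\mathcal{S}$. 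The cluster point must instead be produced by hand, and the only resource available for this is the compactness of the $C_i$.

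So let $\set{A_\iota}$ be a net in $\mathcal{S}$; we may assume $\mathcal{S}\neq\emptyset$, for otherwise there is nothing to prove, and in particular each $C_i\neq\emptyset$. For every index $\iota$ and every $i$ the set $A_\iota\cap C_i$ is non-void, so I choose a point $x^i_\iota\in A_\iota\cap C_i$. The assignment $\iota\mapsto(x^1_\iota,\dots,x^n_\iota)$ is a net in the compact space $C_1\times\cdots\times C_n$; passing to a subnet and relabelling, I may assume that for each $i$ the net $\set{x^i_\iota}$ converges to a point $x^i\in C_i$. Now set $B:=\overline{\set{x^1}}\cup\cdots\cup\overline{\set{x^n}}$. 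This is a finite union of closed sets, hence closed, and since $x^i\in B\cap C_i$ for every $i$ we have $B\in\mathcal{S}$.

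It then remains to check that this subnet $\tau_w$-converges to $B$; this makes $B$ the desired cluster point inside $\mathcal{S}$ and finishes the proof. Here I use that $\tau_w$-convergence of $\set{A_\iota}$ to $B$ is governed entirely by the basic neighbourhoods $\U(\emptyset,\Phi)$ of $B$, and that, because each such $\Phi$ is finite, it suffices to treat one open set at a time: for every open $O$ with $O\cap B\neq\emptyset$ one must have $A_\iota\in\set{A\in\F(X)\mid A\cap O\neq\emptyset}$ eventually. The computation is then immediate. An open $O$ meets $B=\overline{\set{x^1}}\cup\cdots\cup\overline{\set{x^n}}$ if and only if it meets $\set{x^i}$ for some $i$, i.e. $x^i\in O$; since $\set{x^i_\iota}$ converges to $x^i$, the point $x^i_\iota$ is eventually in $O$, and $x^i_\iota\in A_\iota$, so $A_\iota\cap O\neq\emptyset$ eventually. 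The main obstacle is really the observation recorded in the first paragraph: $\mathcal{S}$ need not be $\tau_w$-closed, so everything hinges on recognising that a cluster point must be manufactured from the limits in the compact sets $C_i$ of the chosen witnesses, with closures taken in order to land inside $\F(X)$ while, crucially, the insensitivity of $\tau_w$-convergence to this passage to the closure is preserved.
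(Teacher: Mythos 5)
Your proof is correct and follows essentially the same route as the paper's: choose witnesses $x^i_\iota\in A_\iota\cap C_i$, extract a subnet converging in each compact $C_i$, and show the subnet $\tau_w$-converges to the closed set generated by the limit points (your $\overline{\set{x^1}}\cup\cdots\cup\overline{\set{x^n}}$ is exactly the paper's $\overline{\set{x_1,\dots,x_n}}$), which lies in $\mathcal{S}$. The only cosmetic differences are your use of compactness of the product $C_1\times\cdots\times C_n$ in place of successive subnet extractions, and your explicit remarks on why $\mathcal{S}$ need not be $\tau_w$-closed.
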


\begin{proof}

   Let $\set{M_{\alpha} \mid \alpha\in \mathcal{A}}$ be a net in $\mathcal{S}$ and $x_{\alpha}^i\in M_{\alpha}\cap C_i$.
   By passing to successive subnets we may suppose that each of the nets $\set{x_{\alpha}^i \mid \alpha\in
   \mathcal{A}}$, $1\leq i\leq n$, converges to a point $x_i\in C_i$. Denote by $M$ the closure of $\set{x_1, \dots
   x_n}$ and suppose $\E := \set{U_1, \ldots U_p}$ is a finite family of open subsets of $X$ such that
   $M\in \mathscr{U}(\mset, \E)$. Then for each $k$, $1\leq k\leq p$, there is $1\leq i_k\leq n$ such that
   $x_{i_k}\in U_k$. Hence there is $\alpha_0\in \A$ such that for all $1\leq k\leq p$ and $\alpha > \alpha_0$ we
   have $x_{\alpha}^{i_k}\in U_k$. Thus, if $\alpha > \alpha_0$ then $M_{\alpha}\cap U_k\neq \mset$, $1\leq k\leq
   p$. We have established that $\set{M_{\alpha}}$ converges weakly to $M$ and clearly $M\in \mathcal{S}$.

\end{proof}

\begin{thm} \label{T:lc}

   If $X$ is a locally compact space then $\F(X)$, $\Fp(X)$, $\Li(X)$, and $\Lp(X)$ are locally compact spaces with
   their $\tau_w$ topology.

\end{thm}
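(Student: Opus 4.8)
The plan is to use Lemma \ref{L:compactness} as a compactness engine and to manufacture, inside every basic $\tau_w$-neighbourhood, a set of the type covered by that lemma. One must keep in mind that in this non-Hausdorff setting local compactness means that each point has a \emph{fundamental system} of compact neighbourhoods, so exhibiting a single compact neighbourhood is not enough: I must show compact neighbourhoods can be squeezed inside an arbitrary neighbourhood. Since the sets $\U(\mset,\Phi)$ form a base, it suffices to fix a closed set $A$ and a basic neighbourhood $\U(\mset,\Phi)$ of it, with $\Phi=\set{O_1,\dots,O_n}$, and to produce a $\tau_w$-compact $\mathcal{S}$ having $A$ in its interior with $\mathcal{S}\subset\U(\mset,\Phi)$.

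For the main space $\F(X)$ and a nonempty $A$ (we may assume $\Phi\neq\mset$, replacing it by $\set{X}$ otherwise), I would first use $A\cap O_i\neq\mset$ to pick $x_i\in A\cap O_i$. Invoking the local compactness of $X$, each $x_i$ admits a compact neighbourhood $C_i$ with $x_i\in Int(C_i)\subset C_i\subset O_i$. Setting $\mathcal{S}:=\set{B\in\F(X)\mid B\cap C_i\neq\mset,\ 1\leq i\leq n}$, Lemma \ref{L:compactness} makes $\mathcal{S}$ $\tau_w$-compact. The three inclusions are then routine: $A\in\U(\mset,\set{Int(C_1),\dots,Int(C_n)})$ since $x_i\in A\cap Int(C_i)$; that open set lies in $\mathcal{S}$ because $Int(C_i)\subset C_i$; and $\mathcal{S}\subset\U(\mset,\Phi)$ because $C_i\subset O_i$. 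Thus $\mathcal{S}$ is a compact neighbourhood of $A$ inside the prescribed one. The single point $A=\mset$, whose only $\tau_w$-neighbourhood is $\F(X)$ itself, is handled by noting that $(\F(X),\tau_w)$ is compact, since $\tau_w\subseteq\tau_s$ and $(\F(X),\tau_s)$ is compact.

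Two observations transfer this to the other three hyperspaces. First, every $B\in\mathcal{S}$ meets the nonempty set $C_i$, so $\mathcal{S}\subset\Fp(X)$; hence the same $\mathcal{S}$ is a compact neighbourhood within the $\tau_w$-open set $\Fp(X)$, yielding its local compactness. Second, $\Li(X)$ is $\tau_w$-closed in $\F(X)$, so $\mathcal{S}\cap\Li(X)$ is a closed subset of the compact set $\mathcal{S}$, hence $\tau_w$-compact, and it is a neighbourhood of $A$ in $\Li(X)$; together with the $\tau_w$-compactness of the closed set $\Li(X)$ (which covers $A=\mset$) this settles $\Li(X)$. Finally, for $\Lp(X)=\Li(X)\cap\Fp(X)$ I combine both remarks: $\mathcal{S}\cap\Li(X)$ already lies in $\Fp(X)$, so it is a $\tau_w$-compact neighbourhood of $A$ inside $\Lp(X)$.

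The genuine content, and the step I expect to demand the most care, is the construction in the second paragraph: using the local compactness of $X$ to interpose compact sets $C_i$ between the chosen points $x_i$ and the given open sets $O_i$, so that $\mathcal{S}$ is at once a neighbourhood of $A$ (via the interiors) and small enough to sit in $\U(\mset,\Phi)$ (via $C_i\subset O_i$). Everything else is bookkeeping, but the non-Hausdorff nature of $\tau_w$ must be respected throughout: because the topology is merely $T_0$ one cannot deduce local compactness from a lone compact neighbourhood, and the empty set, whose sole neighbourhood is the whole space, has to be dispatched separately through the global $\tau_w$-compactness inherited from $\tau_s$.
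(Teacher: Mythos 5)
Your proof is correct and takes essentially the same approach as the paper: the same interposition of compact neighbourhoods $C_i$ (the paper's $V_i$) between the points $x_i$ and the sets of $\Phi$, the same sandwich $\U(\mset,\set{Int(C_1),\dots,Int(C_n)})\subset \mathcal{S}\subset \U(\mset,\Phi)$, with Lemma \ref{L:compactness} supplying the compactness of $\mathcal{S}$. The only difference is bookkeeping: the paper passes to $\Fp(X)$, $\Li(X)$, $\Lp(X)$ by simply citing that they are respectively $\tau_w$-open, $\tau_w$-closed, and relatively open subspaces, whereas you verify these transfers (and the empty-set point) by hand.
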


\begin{proof}

   Suppose $X$ is a locally compact space and let $A$ be a closed subset of $X$. For a basic $\tau_w$-neighbourhood
   $\U(\mset,\set{U_i}_{i=1}^n)$ of $A$ we choose $x_i\in A\cap U_i$, $1\leq i\leq n$. Let $V_i$ be a compact
   neighbourhood of $x_i$ contained in $U_i$ and $W_i := Int(V_i)$. Then
   $$
    A\in \U(\mset,\set{W_i}_{i=1}^n)\subset \mathcal{V} := \set{B\in \F(X) \mid B\cap V_i\neq \mset, 1\leq i\leq n}\subset
                                                                                                \U(\mset,\{U_i\}_{i=1}^n).
   $$
   Thus $\mathcal{V}$ is a neighbourhood of $A$ that is compact by the preceding lemma. We have proved that
   $(\F(X),\tau_w)$ is locally compact.

   As remarked above, $\Fp(X)$ is $\tau_s$-open in $F(X)$, $\Li(X)$ is $\tau_w$-closed, $\Lp(X) = \Li(X)\cap
   \Fp(X)$ is relatively open in $\Li(X)$ and the conclusion follows.

\end{proof}

The next result was stated in \cite[Proposition 3.4]{A} for the primitive ideal space of a $C^*$-algebra. However,
the proof given there is valid for any topological space and we reproduce it here.

\begin{prop} \label{P:Baire}

   If $X$ is a Baire topological space then $(\Li(X),\tau_w)$ and $(\Lp(X),\tau_w)$ are Baire spaces.

\end{prop}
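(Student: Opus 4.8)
The plan is to push the Baire property from $X$ up to $\Li(X)$ along the canonical map $\eta_X$, using only that $\eta_X$ is $\tau_w$-continuous and the limit-set criterion of \cite[Lemme 9]{D}. The one computation I would record first is that for a finite family $\Phi = \set{U_1,\dots,U_n}$ of open sets one has $\eta_X^{-1}(\U(\mset,\Phi)) = U_1\cap\dots\cap U_n$, because for an open set $U$ the condition $\overline{\set{x}}\cap U\neq\mset$ holds precisely when $x\in U$. Thus the preimages of the basic $\tau_w$-open sets are exactly the finite intersections of open subsets of $X$, which reconfirms continuity and sets up the transfer.

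Next I would prove two density statements. First, if $G$ is $\tau_w$-dense and open in $\Li(X)$ then $\eta_X^{-1}(G)$ is dense and open in $X$: given a nonempty open $U\subset X$, the trace $\U(\mset,\set{U})\cap\Li(X)$ is a nonempty $\tau_w$-open set, since it contains $\overline{\set{x}}$ for any $x\in U$, so density of $G$ provides $A\in G$ with $A\cap U\neq\mset$; choosing a basic neighbourhood $\U(\mset,\set{V_1,\dots,V_m})\cap\Li(X)\subset G$ of $A$, the fact that $A$ is a limit set meeting $U,V_1,\dots,V_m$ forces $U\cap V_1\cap\dots\cap V_m\neq\mset$, and any point $x$ in this intersection satisfies $x\in U$ together with $\eta_X(x)\in G$.

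Second, if $D\subset X$ is dense then $\eta_X(D)$ is $\tau_w$-dense in $\Li(X)$: a nonempty basic open set $\U(\mset,\set{W_1,\dots,W_r})\cap\Li(X)$ contains some limit set meeting every $W_j$, so the limit-set criterion gives $W_1\cap\dots\cap W_r\neq\mset$, and a point $x\in D$ of this intersection has $\eta_X(x)$ in the basic set. With these two facts the statement follows at once. Given $\tau_w$-dense open sets $G_1,G_2,\dots$ in $\Li(X)$, the first statement makes each $\eta_X^{-1}(G_k)$ dense open in $X$; since $X$ is Baire, $D:=\bigcap_k\eta_X^{-1}(G_k)=\eta_X^{-1}(\bigcap_k G_k)$ is dense in $X$; the second statement then makes $\eta_X(D)=(\bigcap_k G_k)\cap\eta_X(X)$ dense in $\Li(X)$, and since $\eta_X(D)\subset\bigcap_k G_k$ the intersection $\bigcap_k G_k$ is itself dense. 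Hence $(\Li(X),\tau_w)$ is a Baire space. For $\Lp(X)$ I would simply note that $\Lp(X)=\Li(X)\cap\Fp(X)$ is $\tau_w$-open in $\Li(X)$, since $\Fp(X)=\U(\mset,\set{X})$, and invoke the standard fact that an open subspace of a Baire space is Baire.

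The only genuinely non-formal point, and the part I would flag as the crux, is that continuity together with density of the image of $\eta_X$ can never by themselves transfer the Baire property. What makes the argument go through is the limit-set characterization, which converts the combinatorial hypothesis ``each $W_j$ meets the limit set $A$'' into the concrete conclusion ``$\bigcap_j W_j\neq\mset$''. This single device is what is used in both density statements; everything else is routine bookkeeping with the base of $\tau_w$.
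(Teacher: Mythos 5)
Your proof is correct and takes essentially the same route as the paper: transfer the Baire property along $\eta_X$ using the $\tau_w$-density of $\eta_X(X)$ in $\Li(X)$, intersect the pulled-back dense open sets in $X$, push the resulting dense set forward, and then treat $\Lp(X)$ as an open subspace of $\Li(X)$. The only difference is that you justify in detail, via the limit-set criterion of Dixmier's Lemme 9, why $\eta_X$-preimages of $\tau_w$-dense open sets are dense in $X$ --- a step the paper dispatches with a quick appeal to continuity plus density of the image --- so your write-up is, if anything, more complete on the one point that genuinely needs an argument.
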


\begin{proof}

   For each natural number $n$ let $\U_n$ be a $\tau_w$-dense open subset of $\Li(X)$. Since $\eta_X(X)$ is
   $\tau_w$-dense in $\Li(X)$ and $\eta_X$ is $\tau_w$-continuous, $\eta_X^{-1}(\U_n)$ is an open dense subset of
   $X$. From the hypothesis it follows that $\cap_{n\geq 1} \eta_X^{-1}(\U_n)$ is dense in $X$. But then
   $$
    \eta_X(\cap_{n\geq 1} \eta_X^{-1}(\U_n)) = \eta_X(X)\bigcap (\cap_{n\geq 1} \U_n)
   $$
   is $\tau_w$-dense in $Li(X)$. In particular, $\cap_{n\geq 1} \U_n$ is $\tau_w$-dense in $\Li(X)$.

   $\Lp(X)$ is an open dense subset of $(\Li(X),\tau_w)$ so it is a Baire space too.

\end{proof}

\begin{prop} \label{P:opencompact}

   If $X$ has a base consisting of open and compact sets then the same is true for $(\F(X),\tau_w)$ and its subspaces
   $\Fp(X)$, $\Li(X)$, and $\Lp(X)$.

\end{prop}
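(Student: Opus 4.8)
The plan is to show that if $X$ has a base of open compact sets, then the basic $\tau_w$-neighbourhoods $\U(\mset,\Phi)$ built from such sets are themselves both $\tau_w$-open and $\tau_w$-compact, and that these form a base. Openness is immediate by definition. For compactness, observe that when each $O \in \Phi$ is open and compact, a set $A$ meets $O$ if and only if $A \cap \overline{O} = A \cap O \neq \mset$; since $O$ is compact, Lemma~\ref{L:compactness} applies with the compact sets $C_i := O_i$ to show that $\set{A \in \F(X) \mid A \cap O_i \neq \mset,\ 1 \leq i \leq n} = \U(\mset,\Phi)$ is $\tau_w$-compact. So the very same collection that serves as a neighbourhood base for $\tau_w$ consists of open compact sets, which settles $\F(X)$.

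First I would record that the sets $\U(\mset,\Phi)$ with all $O \in \Phi$ open and compact do form a base: given any $A$ and any basic $\tau_w$-neighbourhood $\U(\mset,\set{U_i})$, pick $x_i \in A \cap U_i$ and use the hypothesised base to find an open compact $O_i$ with $x_i \in O_i \subset U_i$; then $A \in \U(\mset,\set{O_i}) \subset \U(\mset,\set{U_i})$. This is the refinement argument already used in the proof of Theorem~\ref{T:lc}, only now the refining sets are open compact rather than merely relatively compact, so no passage to interiors of compact neighbourhoods is needed.

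For the subspaces the key point is that $\F(X)$-open compact sets restrict to open compact sets in any subspace, and that the relevant subspaces are $\tau_w$-closed or relatively open. Concretely, $\Li(X)$ is $\tau_w$-closed in $\F(X)$, so each $\U(\mset,\Phi) \cap \Li(X)$ is closed in the compact set $\U(\mset,\Phi)$, hence compact, while remaining open in $\Li(X)$; thus $\Li(X)$ inherits a base of open compact sets. For $\Fp(X) = \U(\mset,\set{X})$ and $\Lp(X) = \Li(X) \cap \Fp(X)$, which are only relatively open, I would intersect with $X$ (open and compact by hypothesis) in the family $\Phi$: replacing $\Phi$ by $\Phi \cup \set{X}$ leaves the neighbourhood unchanged as a subset of $\Fp(X)$ and forces the defining set to sit inside $\Fp(X)$, so the trace base on $\Fp(X)$ and on $\Lp(X)$ again consists of open compact sets.

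The main obstacle I anticipate is the bookkeeping for the non-closed subspaces $\Fp(X)$ and $\Lp(X)$: one must verify that a set which is open and compact in $\F(X)$ and happens to lie inside $\Fp(X)$ is genuinely open and compact in the subspace topology, and that such sets still form a base there. The hypothesis that $X$ itself is open and compact is exactly what guarantees $\Fp(X)$ is open compact and lets us force neighbourhoods into $\Fp(X)$ by adjoining $X$ to $\Phi$; once that is in place, the compactness transfers because a compact subset of $\F(X)$ that is contained in a subspace is compact in the subspace regardless of openness. The rest is routine.
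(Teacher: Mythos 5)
Your treatment of $\F(X)$ and $\Li(X)$ is correct and coincides with the paper's argument: the sets $\U(\mset,\Phi)$, with $\Phi$ a finite subfamily of the given base $\mathcal{B}$ of $X$, form a base of $(\F(X),\tau_w)$; each is $\tau_w$-compact by Lemma~\ref{L:compactness} applied with $C_i := O_i$; and intersecting with the $\tau_w$-closed set $\Li(X)$ yields relatively open sets that are closed subsets of compact sets, hence compact. One aside in your first paragraph is false, though harmless: in a non-Hausdorff space an open compact set need not be closed, so $A\cap\overline{O}=A\cap O$ can fail (take the Sierpi\'{n}ski space, $O$ the open point, $A$ the closed point). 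But Lemma~\ref{L:compactness} requires only compactness of the $C_i$, not closedness, so that clause should simply be deleted.

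The genuine gap is in your handling of $\Fp(X)$ and $\Lp(X)$: you assert that $X$ is ``open and compact by hypothesis,'' but the hypothesis says only that $X$ has a base of open compact sets, which does not make $X$ compact --- an infinite discrete space is a counterexample. Consequently $\Fp(X)=\U(\mset,\set{X})$ need not be $\tau_w$-compact (for infinite discrete $X$, the cover of $\Fp(X)$ by the sets $\U(\mset,\set{\set{x}})$, $x\in X$, admits no finite subcover), and Lemma~\ref{L:compactness} cannot be applied to the family $\Phi\cup\set{X}$, since it requires every member of the family to be compact. So the adjoin-$X$ device fails in exactly the one case where it changes anything, namely $\Phi=\mset$. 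The repair is what the paper does: let $\Phi$ run over the \emph{nonempty} finite subfamilies of $\mathcal{B}$. If $\Phi\neq\mset$ then every $A\in\U(\mset,\Phi)$ meets some member of $\Phi$ and is therefore nonempty, so $\U(\mset,\Phi)\subset\Fp(X)$ automatically --- no adjunction is needed --- and these sets are compact by the lemma and form a base of $\Fp(X)$ (given nonempty $A$ and a neighbourhood, your own refinement argument already produces a nonempty $\Phi$). Intersecting with $\Li(X)$ then settles $\Lp(X)$ exactly as in your argument for $\Li(X)$.
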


\begin{proof}

   Suppose $\mathcal{B}$ is a base for the topology of $X$ consisting of open and compact sets. Then the collection
   of all the families $\U(\mset,\Phi)$ where $\Phi$ runs through all the finite subfamilies of $\mathcal{B}$
   is a base for $(\F(X),\tau_w)$. Each $\U(\mset,\Phi)$ is $\tau_w$-compact by Lemma \ref{L:compactness}. We get a base
   for $\Fp(X)$ by requiring $\Phi$ to run through the nonempty finite subfamilies of $\mathcal{B}$. Intersecting
   each of the elements of the bases we got for $\F(X)$ and $\Fp(X)$ with the $\tau_w$-closed set $\Li(X)$ we get bases as
   needed for $\Li(X)$ and $\Lp(X)$, respectively.

\end{proof}

\section{The hyperspace $\ML(X)$} \label{S:maximal}

The next result generalizes \cite[Theorem 4.2]{A} where the framework is that of a certain family of ideals of a
$C^*$-algebra and the proof uses $C^*$-algebraic methods. The "if" part of the statement is also a consequence of
\cite[Lemme 15]{D}.

\begin{thm} \label{T:cont}

   The identity map $(\Li(X),\tau_w)\to (\Li(X),\tau_s)$ is continuous at $A\in \Li(X)$ if and only if $A\in
   \ML(X)$.

\end{thm}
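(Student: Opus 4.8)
My plan is to work directly with the two neighbourhood bases rather than with nets. Continuity of the identity at $A$ means that every basic $\tau_s$-neighbourhood $\U(C,\Phi)$ of $A$ contains some basic $\tau_w$-neighbourhood $\U(\mset,\Psi)$ of $A$, both intersected with $\Li(X)$. The conditions coming from $\Phi$, namely meeting each member of $\Phi$, are already of $\tau_w$-type, so they are arranged simply by insisting $\Phi\subset\Psi$; the whole difficulty is the avoidance condition $B\cap C=\mset$. Thus I expect the theorem to reduce to the following statement: $A$ is maximal if and only if for every compact $C$ disjoint from $A$ there is a finite family $\set{W_1,\dots,W_m}$ of open sets each meeting $A$ such that no limit set can simultaneously meet all the $W_i$ and meet $C$.

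For the ``only if'' direction I argue by contraposition, and this part is easy. If $A$ is not maximal, it is properly contained in some maximal limit set $M$, and I pick $x\in M\setminus A$. Since $A\subset M$, every open set meeting $A$ also meets $M$, so $M$ lies in every $\tau_w$-neighbourhood of $A$. On the other hand $\set{B\in\F(X)\mid x\notin B}=\U(\set{x},\mset)$ is a $\tau_s$-open set containing $A$ but not $M$. Hence no $\tau_w$-neighbourhood of $A$ can be contained in this $\tau_s$-neighbourhood, and the identity fails to be continuous at $A$. Note that this uses only that a single point is compact, so no local compactness of $X$ is required.

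The ``if'' direction is the substantial part. I fix a maximal limit set $A$ and a compact $C$ with $A\cap C=\mset$. For each $c\in C$ the set $A\cup\set{c}$ is not a limit set, for otherwise its closure would be a closed limit set properly containing $A$ (the closure of a limit set is again a limit set, as an open set meets a set exactly when it meets its closure), contradicting maximality. By the criterion of \cite[Lemme 9]{D} there are open sets $P_1,\dots,P_k$ each meeting $A\cup\set{c}$ with $\bigcap_j P_j=\mset$. I then separate those $P_j$ that meet $A$ from the remaining ones, each of which must contain $c$; writing $G_c$ for the intersection of the latter, an open neighbourhood of $c$, and $\set{W^c_j}$ for the former, each meeting $A$, the identity $\bigcap_j P_j=\mset$ becomes $G_c\cap\bigcap_j W^c_j=\mset$. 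Consequently, if some limit set met $G_c$ and all the $W^c_j$, the limit-set criterion would force that last intersection to be non-void; hence every limit set meeting all the $W^c_j$ must avoid $G_c$. The neighbourhoods $G_c$ cover $C$, so by compactness finitely many of them $G_{c_1},\dots,G_{c_N}$ already cover it. Taking $\Psi$ to be $\Phi$ together with all the open sets $W^{c_i}_j$ thus produced gives a $\tau_w$-neighbourhood $\U(\mset,\Psi)$ of $A$: any $B\in\Li(X)$ lying in it meets every member of $\Phi$ and, for each $i$, meets all the $W^{c_i}_j$ and therefore avoids $G_{c_i}$, whence $B\cap C=\mset$. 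This yields $\U(\mset,\Psi)\cap\Li(X)\subset\U(C,\Phi)$, as required.

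The main obstacle is precisely this construction in the ``if'' direction: passing from a single non-limit-set witness for each $c$ to one finite family of open sets that works uniformly over all of $C$. The idea that I expect to unlock it is to read the emptiness of $\bigcap_j P_j$ as an obstruction that the limit-set criterion cannot tolerate, thereby converting an avoidance requirement into finitely many intersection requirements; compactness of $C$ then finishes the argument. It is worth emphasising that, just as in the ``only if'' part, this reasoning never uses local compactness of $X$, only the compactness of $C$ built into the definition of $\tau_s$, so the characterisation should hold for an arbitrary topological space $X$, in agreement with the fact that the ``if'' part also follows from \cite[Lemme 15]{D}.
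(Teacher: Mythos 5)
Your proof is correct, and for the substantial (``if'') direction it takes a genuinely different route from the paper's. The paper argues by contradiction with nets: assuming no finite family $\Psi\supset\Phi$ works, it chooses limit sets $B_{\Psi}\in (\U(\mset,\Psi)\setminus\U(C,\Phi))\cap\Li(X)$, extracts from points $x_{\Psi}\in B_{\Psi}\cap C$ a cluster point $x\in C$, and then builds a second net, indexed by the directed set of pairs (open neighbourhood of $x$, family $\Psi$), converging to every point of $A\cup\{x\}$; this exhibits a limit set strictly larger than $A$, contradicting maximality. You instead work pointwise and directly: maximality makes $A\cup\{c\}$ a non-limit set for each $c\in C$, Dixmier's Lemme~9 converts that into a finite open family $P_1,\dots,P_k$ with empty intersection, and splitting this family into the sets $W^c_j$ meeting $A$ and the resulting neighbourhood $G_c$ of $c$ turns the avoidance requirement $B\cap C=\mset$ into finitely many intersection requirements; compactness of $C$ then yields a finite subcover $G_{c_1},\dots,G_{c_N}$ and assembles the single family $\Psi$. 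What your route buys is the complete elimination of nets, subnets and the auxiliary directed set $\mathcal{N}\times\boldsymbol{\Lambda}$: you use Lemme~9 in both directions, whereas the paper uses it only once and otherwise manufactures a convergent net by hand, staying closer to the definition of a limit set. Both arguments rest on the same three ingredients (maximality, Lemme~9, compactness of $C$), and neither needs local compactness of $X$, as you correctly note. Your ``only if'' direction is essentially the paper's; the paper uses an arbitrary strictly larger closed limit set rather than a maximal one, so your appeal to Zorn's lemma is avoidable, though harmless. Two checks you leave implicit deserve a line: at least one $P_j$ misses $A$ (otherwise Lemme~9 applied to the limit set $A$ would give $\bigcap_j P_j\neq\mset$), so $G_c$ is a genuine open neighbourhood of $c$, and at least one $P_j$ meets $A$ (otherwise all $P_j$ contain $c$ and again $\bigcap_j P_j\neq\mset$), so the family $\set{W^c_j}$ is nonempty; both are immediate from the emptiness of $\bigcap_j P_j$.
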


\begin{proof}

Suppose $A$ is a maximal limit set. Let $C$ be a compact subset of $X$ and $\Phi$ a finite family of open subsets of
$X$ such that $A\in \U(C,\Phi)$. We claim that there is a finite family $\Psi\supset \Phi$ of open subsets of $X$
each of which has a nonempty intersection with $A$ and such that $\U(\mset,\Psi)\cap \Li(X)\subset \U(C,\Phi)\cap
\Li(X)$. This, of course, will establish the continuity of the identity map at $A$.

Assume there is no such $\Psi$. Then for each finite family $\Psi\supset \Phi$ of open subsets of $X$ such that
every set in $\Psi$ has a nonempty intersection with $A$ there is $B_{\Psi}\in (\U(\mset,\Psi)\setminus
\U(C,\Phi))\cap \Li(X)$. Denote the collection of all such families $\Psi$ by $\boldsymbol{\Lambda}$ and order it by
inclusion. Clearly $\Psi\in \boldsymbol{\Lambda}$ implies $B_{\Psi}\cap C\neq \mset$. Choose $x_{\Psi}\in
B_{\Psi}\cap C$. The net $\set{x_{\Psi}}$ has a converging subnet to some point $x\in C$. We have $x\notin A$ hence
$A\cup \{x\}\supsetneqq A$. We shall show that $A\cup \{x\}$ is a limit set hence $A\cup \overline{\{x\}}\in
\Lp(X)$, and this will yield a contradiction to the maximality of $A$.

Let $\mathcal{N}$ be the family of all the open neighbourhoods of $x$. We order $\mathcal{N}\times
\boldsymbol{\Lambda}$ by defining $(V_1,\Psi_1)\prec (V_2,\Psi_2)$ if $V_1\supset V_2$ and $\Psi_1\subset \Psi_2)$.
Denote by $\boldsymbol{\Gamma}$ the collection of all the pairs $(V,\Psi)\in \mathcal{N}\times \boldsymbol{\Lambda}$
such that the finite family of open sets $\{V\}\cup \Psi$ has a nonempty intersection. For $(V_1,\Psi_1),
(V_2,\Psi_2)\in \mathcal{N}\times \boldsymbol{\Lambda}$ there is $(V,\Psi)\in \boldsymbol{\Gamma}$ such that
$(V_1,\Psi_1)\prec (V,\Psi)$ and $(V_1,\Psi_2)\prec (V,\Psi)$. Indeed, $V := V_1\cap V_2$ is an open neighbourhood
of $x$ and $\Psi_1\cup \Psi_2\in \boldsymbol{\Lambda}$ hence there is $\Psi\in \boldsymbol{\Lambda}$ that satisfies
$\Psi\supset \Psi_1\cup \Psi_2$ and $x_{\Psi}\in V$. Thus $x_{\Psi}\in V\cap B_{\Psi}$ and since $B_{\Psi}$ is a
limit set that belongs to $\U(\mset,\Psi)$, the family of open sets $\{V\}\cup \Psi$ has a nonempty intersection by
the previously quoted Lemme 9 of \cite{D}. We got $(V,\Psi)\in \boldsymbol{\Gamma}$ as needed. In particular,
$\boldsymbol{\Gamma}$ is a directed set with this order restricted to it. For each $(V,\Psi)\in \boldsymbol{\Gamma}$
we choose $y_{(V,\Psi)}$ in the intersection of the family $\{V\}\cup \Psi$. The net $\{y_{(V,\Psi)}\}$ converges to
every point of $\{x\}\cup A$. It is clear that the net converges to $x$. Let now $y$ be a point of $A$ and $W$ an
open neighbourhood of $y$. With $\Psi_0 := \{W\}\cup \Phi$ we have $(X,\Psi_0)\in \mathcal{N}\times
\boldsymbol{\Lambda}$. By the order property of $\boldsymbol{\Gamma}$ proved above there is $(V_1,\Psi_1)\in
\boldsymbol{\Gamma}$ such that $(X,\Psi_0)\prec (V_1,\Psi_1)$. Clearly if $(V,\Psi)\in \boldsymbol{\Gamma}$ and
$(V_1,\Psi_1)\prec (V,\Psi)$ then $W\in \Psi$ hence $y_{(V,\Psi)}\in \cap \{O \mid O\in \Psi\}\subset W$. We have
proved that the net $\{y_{(V,\Psi)} \mid (V,\Psi)\in \boldsymbol{\Gamma}\}$ converges to $y$ as claimed.

Let now $L$ be a non-maximal closed limit set of $X$. There are $z\in X\setminus L$ and a net that converges to all
the points of $L\cup \overline{\{z\}}$. The set $L$ belongs to the $\tau_s$-open set $\U(\{z\},\{X\})$ but no
$\tau_w$-neighbourhood of $L$ in $\Li(X)$ is contained in $\U(\{z\},\{X\})$ thus the identity map from
$(\Li(X),\tau_w)$ to $(\Li(X),\tau_s)$ is not continuous at $L$. Indeed, if $\U(\mset,\Phi)$ is any basic
$\tau_w$-neighbourhood of $L$ then $L\cup \overline{\{z\}}\in \U(\mset,\Phi)\cap \Li(X)$ but $L\cup
\overline{\{z\}}\notin \U(\{z\},\{X\})$.

\end{proof}

\begin{cor}

   The restrictions of $\tau_w$ and $\tau_s$ to $\ML(X)$ coincide.

\end{cor}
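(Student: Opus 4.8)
The plan is to deduce the equality of the two subspace topologies directly from Theorem \ref{T:cont}, together with the fact, recorded in Section \ref{S:Pre}, that $\tau_w$ is weaker than $\tau_s$ on all of $\F(X)$. Since $\tau_w\subset \tau_s$, the restricted topologies satisfy $\tau_w|_{\ML(X)}\subset \tau_s|_{\ML(X)}$ automatically; equivalently, the identity map $(\ML(X),\tau_s)\to (\ML(X),\tau_w)$ is continuous. The entire content of the corollary is therefore the reverse inclusion, for which I would show that the identity map $(\ML(X),\tau_w)\to (\ML(X),\tau_s)$ is continuous.

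To obtain this, I would invoke the nontrivial half of Theorem \ref{T:cont}: every $A\in \ML(X)$ is a point of continuity of the identity map $(\Li(X),\tau_w)\to (\Li(X),\tau_s)$. The key observation is that continuity at a point is inherited by restriction to a subspace. Concretely, I would fix $A\in \ML(X)$ and a basic $\tau_s$-neighbourhood $\U(C,\Phi)\cap \Li(X)$ of $A$. By the theorem there is a $\tau_w$-neighbourhood $M$ of $A$ in $\Li(X)$ with $M\subset \U(C,\Phi)\cap \Li(X)$; intersecting with $\ML(X)$ yields a $\tau_w|_{\ML(X)}$-neighbourhood $M\cap \ML(X)$ of $A$ contained in the given $\tau_s|_{\ML(X)}$-neighbourhood. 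Hence the identity $(\ML(X),\tau_w)\to (\ML(X),\tau_s)$ is continuous at every point of $\ML(X)$, and therefore continuous.

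Combining the two inclusions gives $\tau_w|_{\ML(X)}=\tau_s|_{\ML(X)}$. I do not expect any genuine obstacle here: all the work has already been done in Theorem \ref{T:cont}, and the only point requiring care is the bookkeeping of subspace topologies --- namely that continuity of the ambient identity map at each point of $\ML(X)$ passes to continuity of the restricted identity map between the two subspaces, which is immediate once one notes that the traces on $\ML(X)$ of $\tau_s$-neighbourhoods in $\Li(X)$ form a neighbourhood base for $\tau_s|_{\ML(X)}$.
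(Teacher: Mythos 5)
Your argument is correct and is precisely the intended one: the paper states this corollary without proof immediately after Theorem \ref{T:cont}, treating it as the immediate consequence you spell out (the trivial inclusion $\tau_w\subset\tau_s$ plus continuity of the identity $(\Li(X),\tau_w)\to(\Li(X),\tau_s)$ at every point of $\ML(X)$, restricted to the subspace). Your bookkeeping of the subspace topologies is the only content to check, and it is handled correctly.
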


A point $y$ of a topological space $Y$ is called separated in $Y$ if, for every $z\in Y\setminus \overline{\{y\}}$,
$y$ and $z$ have disjoint neighbourhoods; equivalently, $\overline{\{y\}}$ is a maximal limit set(see
\cite[D\`{e}finition 16]{D}). It is proved in \cite[Th\`eor\'eme 19]{D} that if $Y$ is a second countable locally
compact Baire space then the subset of all separated points in $Y$ is a dense $G_{\delta}$. For any topological
space $X$ the density of the set of all separated points in $(\Li(X),\tau_w)$ is an immediate corollary of the next
result.

\begin{thm} \label{T:separated}

   Let $X$ be a topological space. An element $A$ of $\Li(X)$ is separated in $(\Li(X),\tau_w)$ if and only if $A$
   is a maximal limit set.

\end{thm}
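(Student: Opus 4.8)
My plan is to reduce the statement to a concrete separation criterion for $\tau_w$ and then handle the two implications separately. Recall from the preliminaries that the $\tau_w$-closure of a singleton $\set{D}$ inside $\Li(X)$ is $\set{D\pr\in \Li(X)\mid D\pr\subset D}$. Hence, by the definition of a separated point, $A$ is separated in $(\Li(X),\tau_w)$ precisely when for every $B\in \Li(X)$ with $B\not\subset A$ the elements $A$ and $B$ admit disjoint $\tau_w$-neighbourhoods. Throughout I would use the characterization of limit sets from \cite[Lemme 9]{D}: a finite family of open sets all meeting a fixed limit set has nonempty common intersection. In particular, a basic set $\U(\mset,\Theta)\cap \Li(X)$ is empty exactly when $\bigcap\set{O\mid O\in \Theta}=\mset$, since otherwise any point of that intersection has a closure lying inside it.

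For the "only if" direction I would argue the contrapositive. Suppose $A$ is not a maximal limit set. Then $A$ is properly contained in some limit set, hence (by Zorn's lemma, as recalled in the preliminaries) in a maximal limit set $B$; in particular $B\in \Li(X)$, $A\subset B$, and $B\not\subset A$. Since $A\subset B$, the closure description gives $A\in \overline{\set{B}}$ in $(\Li(X),\tau_w)$, so every basic $\tau_w$-neighbourhood $\U(\mset,\Psi)\cap \Li(X)$ of $A$—whose defining open sets all meet $A$ and therefore meet the larger set $B$—already contains $B$. Consequently no $\tau_w$-neighbourhood of $A$ can be disjoint from one of $B$, and as $B\not\subset A$ this witnesses that $A$ is not separated.

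For the "if" direction, suppose $A\in \ML(X)$ and let $B\in \Li(X)$ with $B\not\subset A$; fix $z\in B\setminus A$. By maximality of $A$ the set $A\cup\set{z}$ properly contains $A$, so it cannot be a limit set (otherwise its closure $A\cup\overline{\set{z}}$ would be a closed limit set properly containing $A$). By \cite[Lemme 9]{D} there is a finite family $\set{O_1,\dots,O_m}$ of open sets, each meeting $A\cup\set{z}$, with $\bigcap_{i=1}^m O_i=\mset$. I would then split this family according to whether it contains $z$: put $\Phi:=\set{O_i\mid z\in O_i}$ and $\Psi:=\set{O_i\mid z\notin O_i}$. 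Each set in $\Phi$ contains $z\in B$, hence meets $B$; each set in $\Psi$ meets $A\cup\set{z}$ but not $z$, hence meets $A$. Thus $A\in \U(\mset,\Psi)$ and $B\in \U(\mset,\Phi)$.

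It remains to verify disjointness. A closed limit set lying in $\U(\mset,\Psi)\cap \U(\mset,\Phi)\cap \Li(X)=\U(\mset,\Psi\cup\Phi)\cap \Li(X)$ would meet every $O_i$, forcing $\bigcap_{i=1}^m O_i\neq \mset$ by \cite[Lemme 9]{D}, contrary to construction; hence this intersection is empty and the two neighbourhoods are disjoint. The degenerate cases do not occur: were $\Psi$ empty then $z$ would lie in every $O_i$, hence in their empty intersection; were $\Phi$ empty then every $O_i$ would meet the limit set $A$, again forcing a nonempty common intersection. This yields the required disjoint $\tau_w$-neighbourhoods, so $A$ is separated. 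The main obstacle is exactly this last direction—extracting the two families from the failure of $A\cup\set{z}$ to be a limit set and confirming via \cite[Lemme 9]{D} that the neighbourhoods so produced are genuinely disjoint; this reuses the mechanism already exploited in the proof of Theorem \ref{T:cont}.
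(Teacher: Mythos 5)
Your proposal is correct and takes essentially the same route as the paper: the non-maximal case is dispatched via the $\tau_w$-closure description of singletons, and the maximal case applies Dixmier's Lemme 9 to a closed non-limit enlargement of $A$, splitting the resulting finite family with void intersection into two subfamilies that yield disjoint basic $\tau_w$-neighbourhoods, with disjointness again certified by Lemme 9. The only cosmetic difference is that you enlarge $A$ by a single point $z\in B\setminus A$ and split the family by membership of $z$, whereas the paper enlarges by the whole set $A_1$ and splits by intersection with $A$; the mechanism and the verification are the same.
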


\begin{proof}

If $A\in \Li(X)$ is not maximal then there is $A_1\in \Li(X)$ such that $A_1\supsetneqq A$. Then $A_1$ does not
belong to the $\tau_w$-closure of $\{A\}$ in $\Li(X)$. However, $A$ is in the $\tau_w$-closure of $\{A_1\}$ in
$\Li(X)$ hence $A$ and $A_1$ cannot be separated by disjoint $\tau_w$-open sets.

Suppose now that $A$ is a maximal limit set and $A_1\in \Li(X)$ does not belong to the $\tau_w$-closure of $\{A\}$
that is, $A_1$ is not included in $A$. Then $A\cup A_1\in \F(X)\setminus \Li(X)$. By \cite[Lemme 9]{D} there is a
finite family $\Phi$ of open subsets of $X$ such that each of them has a nonempty intersection with $A\cup A_1$ but
the intersection of all the sets in $\Phi$ is void. Let $\Psi$ be the subfamily of $\Phi$ consisting of those sets
that have a nonempty intersection with $A$. Since $A_1\in \Li(X)$ we must have, by the above quoted lemma of
Dixmier, $\Psi\neq \mset$. Similarly, $\Psi_1 := \Phi\setminus \Psi$ is not empty since $A\in \Li(X)$. Now,
$\U(\mset,\Psi)\cap \Li(X)$ is a $\tau_w$-neighbourhood of $A$ in $\Li(X)$ and $\U(\mset,\Psi_1)\cap \Li(X)$ is a
$\tau_w$-neighbourhood of $A_1$ in $\Li(X)$. We have
\[
 \U(\mset,\Psi)\cap \U(\mset,\Psi_1)\cap \Li(X) = \mset
\]
hence $A$ and $A_1$ can be separated by disjoint $\tau_w$-open sets. Indeed, if the above equality does not hold and
$B\in \U(\mset,\Psi)\cap \U(\mset,\Psi_1)\cap \Li(X)$ then
\[
 \cap\{V \mid V\in \Phi\} = (\cap \{V \mid V\in \Psi\})\bigcap (\cap \{V \mid V\in \Psi_1\})\neq \mset
\]
since $B\in \Li(X)$, a contradiction.

\end{proof}

The following two propositions were stated and proved in \cite{A} in the language of $C^*$-algebras. We only had to
rewrite the proofs to be fit in a more general situation.

\begin{prop}[{\cite[Proposition 4.9]{A}}] \label{P:BaireML}

   $\ML(X)$ is a Baire space if $X$ is a Baire space.

\end{prop}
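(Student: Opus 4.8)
The plan is to check the Baire property of $\ML(X)$ by hand: given a sequence $\set{\mathcal{U}_n}_{n\geq 1}$ of $\tau_w$-dense open subsets of $\ML(X)$, I will prove that $\bigcap_{n\geq 1}\mathcal{U}_n$ is $\tau_w$-dense in $\ML(X)$. The natural first move is to pass to the ambient space $\Li(X)$, where the Baire property is already available through Proposition \ref{P:Baire}. For each $n$ I choose a $\tau_w$-open set $V_n\subset \Li(X)$ with $V_n\cap \ML(X) = \mathcal{U}_n$. Using that $\ML(X)$ is $\tau_w$-dense in $\Li(X)$ together with the density of $\mathcal{U}_n$ in $\ML(X)$, a routine two-line argument shows that each $V_n$ is $\tau_w$-dense in $\Li(X)$. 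Since $X$ is a Baire space, Proposition \ref{P:Baire} gives that $(\Li(X),\tau_w)$ is a Baire space, so $\bigcap_{n\geq 1}V_n$ is $\tau_w$-dense in $\Li(X)$.

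It then remains to convert this ambient density into density inside $\ML(X)$. I fix an arbitrary nonempty $\tau_w$-open set of $\ML(X)$, written as $W\cap \ML(X)$ with $W$ open in $\Li(X)$, and I look for a maximal limit set lying in $(W\cap \ML(X))\cap \bigcap_{n\geq 1}\mathcal{U}_n$. Density of $\bigcap_{n\geq 1}V_n$ in $\Li(X)$ produces a point $A\in W\cap \bigcap_{n\geq 1}V_n$; the difficulty is that $A$ need not be maximal and hence need not belong to $\ML(X)$ at all, so the intersections computed so far say nothing directly about the $\mathcal{U}_n$.

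The crux of the argument is the step that repairs this: choose, by Zorn's lemma (as recorded in Section \ref{S:Pre}), a maximal limit set $M\supset A$, and observe that $M$ lies in exactly the $\tau_w$-open sets that contain $A$. This is the one genuinely topological point, and it rests entirely on the description of the $\tau_w$-closure of a singleton: since the $\tau_w$-closure of $\set{M}$ is $\set{B\in \F(X)\mid B\subset M}$, which contains $A$ because $A\subset M$, every $\tau_w$-open set containing $A$ must also contain $M$. Applying this to $W$ and to each $V_n$ gives $M\in W$ and $M\in V_n$ for all $n$; as $M$ is maximal, this means $M\in (W\cap \ML(X))\cap \bigcap_{n\geq 1}\mathcal{U}_n$, which is exactly what was needed. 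I expect the density of the $V_n$ and the final bookkeeping to be entirely routine; the only point that requires care — and the whole reason the statement is true — is this upward replacement of $A$ by a maximal $M$ sharing its $\tau_w$-neighbourhoods.
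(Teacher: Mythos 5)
Your proof is correct and follows essentially the same route as the paper's: lift the dense open sets to $\tau_w$-dense open subsets of $\Li(X)$, apply Proposition \ref{P:Baire} there, and then repair non-maximality by passing from the limit set $A$ found in the ambient intersection to a maximal limit set $M\supset A$, which belongs to every $\tau_w$-open set containing $A$. One small wording slip: $M$ lies in \emph{at least} (not ``exactly'') the $\tau_w$-open sets containing $A$, but this is harmless since you only use, and correctly justify via the closure of $\set{M}$, the direction actually needed.
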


\begin{proof}

   Let $\{\V_n\}$ be a sequence of $\tau_w$-open subsets of $\Li(X)$ such that every $\U_n := \V_n\cap \ML(X)$ is dense
   in $\ML(X)$. Since $\ML(X)$ is $\tau_w$-dense in $\Li(X)$ we get that each $\V_n$ is $\tau_w$-dense in
   $\Li(X)$. By Proposition \ref{P:Baire}, $\cap \V_n$ is $\tau_w$-dense in $\Li(X)$. Let now $\U$ be an open set in
   $\ML(X)$. Then $\U = \V\cap \ML(X)$, $\V$ being a $\tau_w$-open set in
   $\Li(X)$. There exist $B\in (\cap \V_n)\cap \V$ and $B_1\in \ML(X)$ with $B\subset B_1$. Since $B_1$ belongs to
   any $\tau_w$-open set of $\Li(X)$ to which $B$ belongs, we have
   \[
    B_1\in (\cap \V_n)\cap \V\cap \ML(X) = (\cap \U_n)\cap \U.
   \]
   Hence $\cap \U_n$ is dense in $\ML(X)$.

\end{proof}

\begin{prop}[{\cite[Corollary 4.6]{A}}] \label{P:seppts}

   If $X$ is a second countable locally compact Baire space then the family $\{\overline{\{x\}} \mid x \mbox{\quad \emph{is
   separated in}} \quad  X\}$ is dense in $\ML(X)$.

\end{prop}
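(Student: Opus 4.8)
The plan is to prove density directly from the definition: it suffices to show that the family in the statement meets every non-void basic relatively open subset of $\ML(X)$. Since $\tau_w$ and $\tau_s$ agree on $\ML(X)$ (the Corollary to Theorem \ref{T:cont}), a basic such set has the form $\U(\mset,\Phi)\cap \ML(X)$ for a finite family $\Phi=\{U_1,\dots,U_n\}$ of open subsets of $X$; for it to be non-void there must be some $A\in \ML(X)$ with $A\cap U_i\neq\mset$ for every $i$. Note also that the family under consideration is a genuine subset of $\ML(X)$, since $x$ separated in $X$ means precisely that $\overline{\{x\}}\in \ML(X)$.

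The first key step is to collapse the membership condition to a statement about a single open set. For any $x\in X$ and any open $U$ one has $\overline{\{x\}}\cap U\neq\mset$ if and only if $x\in U$ (if $x\in U$ then $x\in\overline{\{x\}}\cap U$; conversely a point of $\overline{\{x\}}\cap U$ forces every point of $\{x\}$, i.e.\ $x$, into the open set $U$). Hence $\overline{\{x\}}\in \U(\mset,\Phi)$ if and only if $x\in V:=\bigcap_{i=1}^{n}U_i$, so I only need to produce a separated point $x$ lying in $V$.

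The second key step, which I regard as the crux of the argument, is to guarantee $V\neq\mset$. Here I would invoke the limit set characterization \cite[Lemme 9]{D}: since $A$ is a limit set and each $U_i$ meets $A$, the finite family $\Phi$ has non-void intersection, that is $V\neq\mset$. This is exactly where the hypothesis $A\in \ML(X)\subset\Li(X)$ enters; everything else is formal, so the only point demanding care is correctly identifying the basic neighbourhoods of $\ML(X)$ and applying \cite[Lemme 9]{D} to force $V$ non-empty.

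Finally, since $X$ is second countable, locally compact and Baire, Dixmier's theorem \cite[Th\`eor\'eme 19]{D} applies to $X$ itself and shows that its separated points form a dense subset. Thus the non-void open set $V$ contains a separated point $x$. Then $\overline{\{x\}}\in \ML(X)$ and, by the first step, $\overline{\{x\}}\in \U(\mset,\Phi)$, so $\overline{\{x\}}$ lies in the given basic open set and belongs to our family. I expect no serious obstacle beyond the two key steps above: the density of the separated points of $X$, transported through $\eta_X$ and combined with the limit set property, yields the conclusion.
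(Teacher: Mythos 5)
Your proof is correct. Both you and the paper rest on the same external input, Dixmier's \cite[Th\`eor\'eme 19]{D}, giving the density of the set $T$ of separated points in $X$; the routes diverge after that. The paper's proof is a two-line transfer argument: since $\eta_X$ is $\tau_w$-continuous and $\eta_X(X)$ is $\tau_w$-dense in $\Li(X)$ (both recorded in the Preliminaries), density of $T$ in $X$ gives at once that $\eta_X(T)$ is $\tau_w$-dense in all of $\Li(X)$, and since $\eta_X(T)=\eta_X(X)\cap\ML(X)\subset\ML(X)$, it is a fortiori dense in the subspace $\ML(X)$. You instead verify density directly on basic open sets: you identify the relative basic sets $\U(\mset,\Phi)\cap\ML(X)$, observe that $\overline{\{x\}}\in\U(\mset,\Phi)$ if and only if $x\in V:=\bigcap_{i}U_i$, force $V\neq\mset$ by applying \cite[Lemme 9]{D} to some $A\in\ML(X)$ in the basic set, and then pick a separated point inside $V$. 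This is in substance an unpacking of the paper's argument, because the density of $\eta_X(X)$ in $\Li(X)$ is itself established from Lemme 9 in the Preliminaries; what your version buys is self-containedness and an explicit display of where the limit-set property enters, while the paper's version is shorter and yields the stronger intermediate fact that the family is $\tau_w$-dense in the whole of $\Li(X)$, not merely in $\ML(X)$. One cosmetic remark: in your crux step only $A\in\Li(X)$ is used; the maximality of $A$ plays no role there, and enters only through the requirement that the sets you must hit are the nonempty relative open subsets of $\ML(X)$.
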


\begin{proof}

   As mentioned before, \cite[Th\`eor\'eme 19]{D} asserts that the set
   \[
    T := \{x\in X \mid x \mbox{\quad is separated in} \quad X\}
   \]
   is dense in $X$. Since $\eta_X(X)$ is $\tau_w$-dense in $\Li(X)$ and $\eta_X$ is $\tau_w$-continuous we can infer
   that $\eta_X(T)$ is $\tau_w$-dense in $\Li(X)$. In particular, $\eta_X(T) = \eta_X(X)\cap \ML(X)$ is dense in
   $\ML(X)$.

\end{proof}

We shall have more to say about the set considered in the statement of Proposition \ref{P:seppts} in Corollary
\ref{C:gdelta}

Theorems \ref{T:cont} and \ref{T:separated} give us some information about the way $\ML(X)$ is imbedded in $\Li(X)$.
Theorem \ref{T:delta} will show us another aspect of this imbedding when the space is second countable. First we
need two lemmas.

\begin{lem} \label{L:open}

   Let $Y$ be a compact space, $M\subset Y\times Y$ and
   \[
    S(M) := \{y\in Y \mid \{y\}\times Y\subset M\}.
   \]
   If $M$ is open then $S(M)$ is open and if $M$ is a $G_{\delta}$ set then $S(A)$ is a $G_{\delta}$ set too.

\end{lem}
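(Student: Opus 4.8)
The plan is to prove both statements by a direct set-theoretic unwinding of the definition of $S(M)$, using compactness of $Y$ through the tube lemma. First I would treat the open case. Fix $y_0\in S(M)$, so that $\{y_0\}\times Y\subset M$. Since $M$ is open and $Y$ is compact, the standard tube lemma produces an open neighbourhood $W$ of $y_0$ in $Y$ with $W\times Y\subset M$; for each $z\in Y$ choose a basic open box $U_z\times V_z\subset M$ containing $(y_0,z)$, extract a finite subcover $V_{z_1},\dots,V_{z_m}$ of the compact slice $\{y_0\}\times Y$, and set $W:=\bigcap_{j=1}^m U_{z_j}$. Then every $y\in W$ satisfies $\{y\}\times Y\subset M$, i.e. $W\subset S(M)$, so $S(M)$ is open.

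For the $G_\delta$ case, write $M=\bigcap_{n\geq 1} M_n$ with each $M_n$ open and, without loss of generality, decreasing ($M_n\supset M_{n+1}$, replacing $M_n$ by $\bigcap_{k\le n}M_k$). The naive guess $S(M)=\bigcap_n S(M_n)$ is exactly the identity I would aim to verify. The inclusion $S(M)\subset\bigcap_n S(M_n)$ is immediate since $M\subset M_n$ forces $S(M)\subset S(M_n)$. For the reverse inclusion, suppose $y\in S(M_n)$ for every $n$; then for each fixed $z\in Y$ we have $(y,z)\in M_n$ for all $n$, hence $(y,z)\in\bigcap_n M_n=M$, and since $z$ was arbitrary, $\{y\}\times Y\subset M$, i.e. $y\in S(M)$. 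Combined with the open case (each $S(M_n)$ is open), this exhibits $S(M)=\bigcap_n S(M_n)$ as a $G_\delta$ set.

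The only place compactness is genuinely used is the open case, via the tube lemma; the $G_\delta$ case is then a formal consequence. The mildly delicate point to get right is the direction of the quantifiers in the reverse inclusion for the $G_\delta$ statement: one must check membership $(y,z)\in M$ \emph{pointwise} in $z$ first and only then quantify over $z$, which is legitimate precisely because $\bigcap_n M_n$ is computed pointwise on each slice. I expect the main (and essentially the only) obstacle to be a clean writeup of the tube-lemma argument; no further subtlety arises, and the statement for $S(M)$ should read $S(M)$ throughout (the excerpt's ``$S(A)$'' appears to be a typo for $S(M)$).
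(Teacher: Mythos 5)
Your proposal is correct and follows essentially the same argument as the paper: the open case is handled by the tube-lemma covering of the slice $\{y_0\}\times Y$ by finitely many open boxes $U_i\times V_i\subset M$ and taking $\bigcap_i U_i$, and the $G_\delta$ case follows formally from the identity $S\left(\bigcap_n M_n\right)=\bigcap_n S(M_n)$. Your reading of ``$S(A)$'' as a typo for $S(M)$ is also right.
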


\begin{proof}

   Suppose $M$ is an open set. If $y\in S(A)$ then, by using the compactness of $Y$, we can infer that there are
   open subsets $\{U_i\}_{i=1}^n$ and $\{V_i\}_{i=1}^n$ of $Y$ such that
   \[
    \{y\}\times Y\subset \cup_{i=1}^n (U_i\times V_i)\subset M.
   \]
   Then
   \[
    y\in \cap_{i=1}^n U_i\subset S(M)
   \]
   and $\cap_{i=1}^n U_i$ is open.

   Suppose now $M$ is a $G_{\delta}$ set, $M = \cap_1^{\infty} M_n$ with each $M_n$ open in $Y\times Y$. Since $S(M) =
   \cap_1^{\infty} S(M_n)$ and $S(M_n)$ is open by the first part of the proof, the conclusion obtains.

\end{proof}

\begin{lem} \label{L:closed}

   Let $X$ be a locally compact space. Then
   \[
    \E := \{(A,B)\in \Li(X)\times \Li(X) \mid A\subset B\}
   \]
   is $(\tau_s\times \tau_s)$-closed in $\Li(X)\times \Li(X)$.

\end{lem}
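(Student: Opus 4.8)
The plan is to verify the closedness of $\E$ by nets, which is legitimate because $(\Li(X),\tau_s)$ is compact Hausdorff (as noted in Section~\ref{S:Pre}), so $\Li(X)\times\Li(X)$ with $\tau_s\times\tau_s$ is again compact Hausdorff and a subset is closed precisely when it is stable under limits of convergent nets. I would therefore start with a net $\set{(A_\iota,B_\iota)}$ in $\E$ that $(\tau_s\times\tau_s)$-converges to a pair $(A,B)\in\Li(X)\times\Li(X)$; thus $A_\iota\To A$ and $B_\iota\To B$ in $\tau_s$ and $A_\iota\subset B_\iota$ for every $\iota$. The goal is to show $A\subset B$, that is, $(A,B)\in\E$.

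The heart of the argument is the net characterization of $\tau_s$-convergence in Proposition~\ref{P:conv1}, whose converse direction is at my disposal because $X$ is locally compact. Fix $x\in A$. Applying condition (\textrm{b}) to the net $\set{A_\iota}$ converging to $A$, I obtain a subnet $\set{A_{\iota_\kappa}}$ together with points $x_{\iota_\kappa}\in A_{\iota_\kappa}$ such that $x_{\iota_\kappa}\To x$. Since $A_{\iota_\kappa}\subset B_{\iota_\kappa}$, these same points satisfy $x_{\iota_\kappa}\in B_{\iota_\kappa}$.

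Now the subnet $\set{B_{\iota_\kappa}}$ still $\tau_s$-converges to $B$ (a subnet of a convergent net converges to the same limit), so by Proposition~\ref{P:conv1} it satisfies condition (\textrm{a}) with respect to $B$. Feeding in the points $x_{\iota_\kappa}\in B_{\iota_\kappa}$ with $x_{\iota_\kappa}\To x$, condition (\textrm{a}) yields $x\in B$. Since $x\in A$ was arbitrary, $A\subset B$, so $(A,B)\in\E$ and $\E$ is $(\tau_s\times\tau_s)$-closed.

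I expect the only delicate point to be the bookkeeping with the subnet: condition (\textrm{b}) for $A$ forces me to pass to a subnet, and I must ensure that condition (\textrm{a}) for $B$ remains applicable along that same subnet, which it does precisely because $\tau_s$-convergence is inherited by subnets. If one prefers to avoid re-invoking the proposition for the subnet, the same conclusion is reachable directly in the spirit of the second half of Proposition~\ref{P:conv2}: assuming $x\notin B$, local compactness yields a compact $C$ with $x\in Int(C)\subset C\subset X\setminus B$, so $\U(C,\set{X})$ is a $\tau_s$-neighbourhood of $B$ forcing $B_\iota\cap C=\mset$ eventually, while $x_{\iota_\kappa}\To x\in Int(C)$ forces $x_{\iota_\kappa}\in B_{\iota_\kappa}\cap C$ for large $\kappa$, a contradiction.
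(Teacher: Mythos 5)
Your proof is correct and is essentially the paper's own argument: fix $x\in A$, use condition (b) of Proposition \ref{P:conv1} (valid in the converse direction by local compactness) to extract a subnet $\set{A_{\iota_\kappa}}$ with points $x_{\iota_\kappa}\in A_{\iota_\kappa}\subset B_{\iota_\kappa}$ converging to $x$, and then condition (a) applied to $\set{B_{\iota_\kappa}}$ to get $x\in B$. The paper's version is merely terser; your extra care about the subnet of $\set{B_\iota}$ still converging to $B$ makes explicit what the paper's ``Again by Proposition \ref{P:conv1}'' leaves implicit.
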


\begin{proof}

   Let $\{(A_{\iota},B_{\iota})\}$ be a net in $\E$ that $(\tau_s\times \tau_s)$-converges to $(A,B)$.
   Given $x\in A$ there exists, by Proposition \ref{P:conv1}, a subnet $\{A_{\iota_{\kappa}}\}$ of $\{A_{\iota}\}$ and
   points $x_{\iota_{\kappa}}\in A_{\iota_{\kappa}}\subset B_{\iota_{\kappa}}$ such that $\{x_{\iota_{\kappa}}\}$
   converges to $x$. Again by Proposition \ref{P:conv1}, $x\in B$ and we have shown $A\subset B$ that is $(A,B)\in
   \E$.
\end{proof}

\begin{thm} \label{T:delta}
   If $X$ is a second countable locally compact space then $\ML(X)$ is a $G_{\delta}$ subset of $(\Li(X),\tau_s)$.
\end{thm}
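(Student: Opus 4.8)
The plan is to exhibit $\ML(X)$ as a set of the form $S(M)$ as in Lemma \ref{L:open}, applied with the compact space $Y := (\Li(X),\tau_s)$. First I would record that $Y$ is compact and metrizable: $(\F(X),\tau_s)$ is metrizable because $X$ is locally compact and second countable (the fact \cite[Lemme 2]{D} recalled in the Preliminaries), and $\Li(X)$ is a $\tau_s$-closed, hence compact and metrizable, subspace. Consequently the product $Y\times Y$ is compact metrizable as well, so in particular every open subset of $Y\times Y$ is $F_{\sigma}$ and every closed subset is $G_{\delta}$.

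Next I would translate maximality into the language of Lemma \ref{L:open}. Set
\[
 P := \{(A,B)\in Y\times Y \mid A\subsetneqq B\} = \E\setminus \Delta,
\]
where $\E$ is the set of Lemma \ref{L:closed} and $\Delta$ denotes the diagonal of $Y\times Y$, and put $M := (Y\times Y)\setminus P$. A closed limit set $A$ fails to be maximal precisely when there is some $B\in \Li(X)$ with $A\subsetneqq B$; therefore $A\in \ML(X)$ if and only if $(A,B)\in M$ for every $B\in Y$, i.e. if and only if $\{A\}\times Y\subset M$. In the notation of Lemma \ref{L:open} this says exactly $\ML(X) = S(M)$.

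It then remains to check that $M$ is a $G_{\delta}$ subset of $Y\times Y$, so that Lemma \ref{L:open} delivers that $S(M)=\ML(X)$ is a $G_{\delta}$ subset of $Y=(\Li(X),\tau_s)$. Equivalently I must show that $P$ is $F_{\sigma}$. Here $\E$ is $(\tau_s\times \tau_s)$-closed by Lemma \ref{L:closed}, and since $Y$ is Hausdorff the diagonal $\Delta$ is closed, so its complement is open and hence $F_{\sigma}$ in the metrizable space $Y\times Y$; writing $(Y\times Y)\setminus \Delta = \bigcup_n F_n$ with each $F_n$ closed gives $P = \E\cap\big((Y\times Y)\setminus\Delta\big) = \bigcup_n (\E\cap F_n)$, a countable union of closed sets. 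Thus $P$ is $F_{\sigma}$, so $M$ is $G_{\delta}$, and the proof is complete.

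The only place where genuine care is needed is the $F_{\sigma}$-decomposition of $P$: this is exactly where second countability (through metrizability) enters, since in a general compact Hausdorff space the difference of a closed set and the diagonal need not be $F_{\sigma}$. Everything else is the bookkeeping of verifying the identity $\ML(X)=S(M)$ and invoking the two preparatory lemmas.
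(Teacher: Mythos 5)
Your proposal is correct and takes essentially the same route as the paper: the paper's own (very terse) proof introduces exactly your sets, namely $\mathcal{D}=\Delta$, $\E$, and $\mathcal{T}=M$, and states the characterization $A\in \ML(X)$ if and only if $\{A\}\times \Li(X)\subset \mathcal{T}$, which is your identity $\ML(X)=S(M)$. The only difference is one of exposition: the paper leaves implicit the points you spell out (compact metrizability of $(\Li(X),\tau_s)$, closedness of $\E$ via Lemma \ref{L:closed} and of the diagonal, the $F_{\sigma}$ decomposition making $\mathcal{T}$ a $G_{\delta}$, and the final appeal to Lemma \ref{L:open}).
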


\begin{proof}

   Set
   \[
    \mathcal{D} := \{(A,A) \mid A\in \Li(X)\}, \quad \E := \{(A,B)\in \Li(X)\times \Li(X) \mid A\subset B\},
   \]
   and
   \[
    \mathcal{T} := \Li(X)\times \Li(X)\setminus (\E\setminus \mathcal{D}).
   \]
   Then for $A\in \Li(X)$ we have $A\in \ML(X)$ if and only if $\{A\}\times \Li(X)\subset \mathcal{T}$.

\end{proof}

\begin{rem}

   If $X$ is a second countable locally compact space then $\ML(X)$ is a Baire space since it is a $G_{\delta}$
   subset of the compact metrizable space $(\Li(X),\tau_s)$.

\end{rem}

\begin{cor} \label{C:gdelta}

   If $X$ is a second countable locally compact space in which every closed subset is a Baire space then
   $\{\overline{\{x\}} \mid x \quad \mbox{\emph{is separated in}} \quad X\}$ is a $G_{\delta}$ subset of $\Li(X)$; it is also a
   dense subset of $\ML(X)$.

\end{cor}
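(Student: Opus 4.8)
The plan is to treat the two assertions separately, reading ``$G_{\delta}$ subset of $\Li(X)$'' as referring to the compact metrizable space $(\Li(X),\tau_s)$ (as in Theorem \ref{T:delta}), and to isolate the $G_{\delta}$ claim as the real work. Write $S := \{\overline{\{x\}} \mid x \text{ is separated in } X\}$. Since $X$ is a closed subset of itself, the hypothesis guarantees that $X$ is a Baire space, so Proposition \ref{P:seppts} applies verbatim and yields that $S$ is dense in $\ML(X)$. For the $G_{\delta}$ assertion I would reduce to showing that $S$ is a $G_{\delta}$ subset of $\ML(X)$: indeed $\ML(X)$ is itself a $G_{\delta}$ in $(\Li(X),\tau_s)$ by Theorem \ref{T:delta}, and a $G_{\delta}$ subset of a $G_{\delta}$ subset is again a $G_{\delta}$, so this reduction suffices.

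The key step is a topological characterization of the members of $S$ inside $\ML(X)$: for $A\in \ML(X)$ one has $A\in S$ if and only if $A$ is \emph{irreducible} as a topological space, that is, no two nonempty relatively open subsets of $A$ are disjoint. One implication is immediate: if $A=\overline{\{x\}}$ then $x$ lies in every nonempty open subset of $A$, so any two such subsets meet. For the converse I would exploit that $A$, being a closed subspace of $X$, is second countable and --- here the hypothesis is essential --- a Baire space. Fixing a countable base $\{B_n\}$ of $X$, the nonempty sets among the $B_n\cap A$ form a countable base of $A$; if $A$ is irreducible then each of them is dense in $A$, so by the Baire property their intersection is dense, hence nonempty. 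A point $x$ in this intersection lies in every nonempty open subset of $A$, whence $\overline{\{x\}}=A$ (using that $A$ is closed); since $A$ is a maximal limit set, $x$ is separated and $A\in S$. I expect this to be the crux, and it is precisely where ``every closed subset of $X$ is a Baire space'' is needed: without it an irreducible maximal limit set need not possess a generic point.

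It then remains to check that $\{A\in \ML(X) \mid A \text{ is irreducible}\}$ is a $G_{\delta}$. Here I recall that on $\ML(X)$ the topologies $\tau_w$ and $\tau_s$ coincide, so the subspace is metrizable. Its complement consists of the reducible sets, and $A$ is reducible exactly when there are indices $m,n$ with $B_m\cap A\neq \mset$, $B_n\cap A\neq \mset$ and $B_m\cap B_n\cap A=\mset$. For fixed $m,n$ this locus is the intersection of the two $\tau_w$-open sets $\U(\mset,\{B_m\})\cap \ML(X)$ and $\U(\mset,\{B_n\})\cap \ML(X)$ with the $\tau_w$-closed set $\ML(X)\setminus \U(\mset,\{B_m\cap B_n\})$. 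In the metrizable space $\ML(X)$ both open and closed sets are $F_{\sigma}$, and finite intersections of $F_{\sigma}$ sets are $F_{\sigma}$, so each such locus is $F_{\sigma}$; taking the countable union over all pairs $m,n$ shows that the reducible sets form an $F_{\sigma}$, and therefore $S$ is a $G_{\delta}$ in $\ML(X)$. Combined with the reduction of the first paragraph this completes the proof; the only genuinely delicate point is the Baire-category argument of the second paragraph, the remaining manipulations being routine descriptive bookkeeping.
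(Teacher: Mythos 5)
Your proposal is correct, but it reaches the $G_{\delta}$ assertion by a genuinely different route than the paper. The paper's proof is essentially a two-line citation argument: by Dixmier's Th\'eor\`eme 7, $\eta_X(X)$ is a $G_{\delta}$ subset of $(\F(X),\tau_s)$, hence of $\Li(X)$; since the set in question equals $\eta_X(X)\cap \ML(X)$ and $\ML(X)$ is a $G_{\delta}$ in $(\Li(X),\tau_s)$ by Theorem \ref{T:delta}, the intersection is a $G_{\delta}$ in $\Li(X)$, and density in $\ML(X)$ is Proposition \ref{P:seppts} (exactly as in your first paragraph). You replace the appeal to Dixmier's theorem by a self-contained argument: you identify the separated point closures inside $\ML(X)$ as precisely the \emph{irreducible} maximal limit sets, where the nontrivial direction (irreducible $\Rightarrow$ has a generic point) is a standard Baire-category argument using second countability and the hypothesis that closed subsets are Baire spaces; you then observe that reducibility is witnessed by pairs from a countable base, so the reducible sets form an $F_{\sigma}$ in the metrizable space $\ML(X)$, and you finish with Theorem \ref{T:delta} and the transitivity of the $G_{\delta}$ property. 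Both arguments use Theorem \ref{T:delta} and Proposition \ref{P:seppts} in the same way; the difference is entirely in how the ``point closure'' part is certified as $G_{\delta}$. The paper's route is shorter and yields slightly more (all of $\eta_X(X)$, not just the separated point closures, is a $G_{\delta}$ in $\Li(X)$); yours is self-contained, makes explicit exactly where the Baire hypothesis enters (it manufactures generic points for irreducible closed sets --- without it the characterization fails), and in effect reproves the special case of Dixmier's Th\'eor\`eme 7 that the corollary actually needs. Your bookkeeping is sound: the loci $\U(\mset,\{B_m\})\cap \U(\mset,\{B_n\})\cap (\ML(X)\setminus \U(\mset,\{B_m\cap B_n\}))$ are intersections of relatively open and relatively closed sets, all $F_{\sigma}$ in a metrizable space, and metrizability of $\ML(X)$ follows from second countability and local compactness of $X$ as recorded in the paper.
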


\begin{proof}

By \cite[Theorem 7]{D}, $\eta_X(X)$ is a $G_{\delta}$ subset of $\F(X)$ hence it is a $G_{\delta}$ subset of
$\Li(X)$. Then $\{\overline{\{x\}} \mid x \quad \mbox{\emph{is separated in}} \quad X\} = \eta_X(X)\cap \ML(X)$ is a
dense $G_{\delta}$ subset of $\ML(X)$ by Proposition \ref{P:seppts} and a $G_{\delta}$ subset of $\Li(X)$ by Theorem
\ref{T:delta}.

\end{proof}

\begin{rem}

The primitive ideal space of a separable $C^*$-algebra with its hull-kernel topology satisfies the hypothesis of
Corollary \ref{C:gdelta}.

\end{rem}
 ----------------------------------------------------------------
\bibliographystyle{amsplain}
\bibliography{}

\end{document}